\newtheorem{dummy}{realdumb}[section]
\newtheorem{theorem}[dummy]{Theorem}
\newtheorem{proposition}[dummy]{Proposition}
\theoremstyle{definition}               
\newtheorem{question}[dummy]{Question}
\newtheorem{remark}[dummy]{Remark}
{\leqno{\rm ({\thedummy})} $$} \numberwithin{equation}{dummy}
\theoremstyle{plain}
\begin{document}
\title{On manifolds with nonhomogeneous factors}

\author{M. C\'ardenas, F.F. Lasheras, A. Quintero}
\address{Departamento de Geometr\'{\i}a y Topolog\'{\i}a, Universidad de Sevilla,
Apdo 1160, 41080-Sevilla, Spain}
\email{mcard@us.es, lasheras@us.es, quintero@us.es}
\author{D. Repov{\v{s}}}
\address{Faculty of Mathematics and Physics, 
and Faculty of Education, University of Ljubljana,
P.O. Box 2964, Ljubljana 1001, Slovenia}
\email{dusan.repovs@guest.arnes.si}

\keywords{$k$-homogeneity, rigidity,
ANR, upper semicontinuous decomposition,
generalized
manifold, cell-like resolution, 
general position property, 
manifold recognition theorem}

\subjclass[2010]{Primary 57N15, 57N75; Secondary 57P99, 53C70}

\begin{abstract}
{We present simple examples of finite-dimensional connected homogeneous spaces (they are actually topological manifolds) with nonhomogeneous and nonrigid factors. In particular, we give an elementary solution of an old problem in general topology concerning homogeneous spaces.}
\end{abstract}

\date{\today}

\maketitle
\section{Introduction}\label{sect1}
A topological space $X$ is said to be {\it homogeneous} if for every pair of points $x,y\in X$ there exists a homeomorphism 
$h:(X,\{x\})\to (X,\{y\})$. This very classical topological
 notion became very important when in the 1960's Bing and Borsuk proved that
in dimensions below $3$, homogeneity can actually detect topological manifolds among all finite-dimensional absolute neighborhood retracts (ANR's). 

Bing and Borsuk also conjectured  that
this is
true in all dimensions, and this conjecture
remains a formidable open problem (in dimension $3$ it  implies the Poincar\' e Conjecture). 
Recently, homogeneity has gained renewed attention among geometric topologists,
 since it turned out that the so-called Busemann $G$-spaces (which have also been conjectured to be topological manifolds)
possess homogeneity among other key properties \cite{Bryant1, halversonrepovs}. 

It was  in our recent investigations of Bing-Borsuk and Busemann Conjectures
 that we came upon some observations
on the homogeneity
%
%
of products of nonhomogeneous spaces which we have collected in the present paper. 
In particular, 
we give (uncountably many) connected nonrigid finite-dimensional positive answers to the following question from \S 1.7 listed on
page 125 of \cite{topproblist}: "Is there a nonhomogeneous (compact) space whose square is homogeneous?". 
Clearly,
there is a dimensional restriction to such examples, namely 
$n\geq 3$. 

Several positive answers to this
question are already known. In 2003 a nonconnected example was given by Rosicki \cite{rosicki} in the realm of topological groups. Earlier, an infinite-dimensional connected rigid example was constructed by van Mill \cite{vanmill} in 1981, whereas in 1983 Ancel and Singh \cite{Ancel-Singh} constructed finite-dimensional rigid examples $X$ with $dim\ X\geq 4$ and Ancel, Duvall and Singh \cite{Ancel-Duvall-Singh} produced such an example also for the case $dim\ X=3$. Recall that a space is said to be \emph{rigid} if it does not have any self-homeomorphism other than the identity.

The results in this paper provide alternative finite-dimensional answers to the question above which are easier to construct than the rigid ones, as a straightforward application of the theory of decompositions of manifolds.  

In a more general setting, we say that a space $X$ is \emph{$k$-homogeneous}, $k\geq 2$, if for any given $k$-element sets $\{x_1,\dots , x_k\}$ and $\{y_1,\dots , y_k\}$, there exists a homeomorphism $h:X\to X$ such that $h(x_i)=y_i$ for $i=1,\dots, n$. The case $k=2$ is simply referred to as \emph{bihomogeneity}. Our results can also be used to get examples of manifold factors that fail to be
bihomogeneous and $(k\ge 3)$-homogeneous 
since any of them implies homogeneity.

\section{Wild cells of arbitrary dimensions and codimensions}\label{sect2}
In geometric topology, a \emph{wild $k$-cell}  in $\mathbb{R}^n$ ($0 < k <n$) is a topological embedding of the 
unit $k$-ball of $\mathbb{R}^k$ which cannot be mapped onto the 
canonical embedding $B^k \subset \mathbb{R}^k \subset \mathbb{R}^n$ 
by a homeomorphism of $\mathbb{R}^n$ onto itself. 
First examples of  wild $1$-cells in $\mathbb{R}^3$ (called  \emph{wild arcs}) were constructed by 
Artin and Fox (\cite{artinfox}).
In fact, there are uncountably many wild arcs
(see \cite{foxharrold,lomonaco,myers}). 
In \cite{myers} different arcs were distinguished by the fundamental groups of their complements: i.e., two arcs 
$\alpha$, $\beta$ are not equivalent  if and only if 
\break
$\pi_1(S^3 \setminus \alpha)\not\cong \pi_1(S^3 \setminus \beta)$. Recall that two arcs are called \emph{equivalent} if there is a self-homeomorphism of $\mathbb R^3$ taking one arc to the other.
Notice that we can consider these arcs to be wild also
in $S^3$.

Well-known methods based on elementary properties of the suspension of a space lead to the construction 
of wild cells in arbitrary dimensions.
To illustrate this
we shall 
give some details. Let $\mathcal{F}_{3,1}$ be any uncountable family of wild arcs in $S^3$ 
such that their complements in $S^3$ are not simply connected
(for instance,
the one given in \cite{myers}), and let $\alpha\in\mathcal {F}_{3,1}$. 
Then for each $k \geq 1$ one can
construct from $\alpha$ a sequence of wild arcs $(\alpha_k) \subset S^{3+k}$.
Indeed, if we  already have a wild arc $\alpha_{k-1}$  in $S^{3+(k-1)}$, then 
by Corollary 2.6.4 of \cite{davermanvenema},
the sphere $S^{3+k}$ is homeomorphic 
to the suspension of the quotient space $S^{3+(k-1)}/\alpha_{k-1}$. 

Now, using Lemma 2.7.2 of \cite{davermanvenema}, 
let $\alpha_k$ be the arc in $S^{3+k}$ that corresponds to the suspension of the class of the points of 
 $\alpha_{k-1}$ in the quotient  $S^{3+(k-1)}/\alpha_{k-1}$. Notice that $S^{3+k} \setminus \alpha_k$ is homotopically equivalent to $S^{3+(k-1)} \setminus \alpha_{k-1}$, 
 hence $\alpha_{k}$
 is wild.
So for each $n\geq 3$, there is a family $\mathcal F_{n,1}$ of uncountably many distinct wild arcs in $S^n$.

Now, given $n\geq 3$
and
$0<k<n$, let $\alpha$ be a wild arc in $S^{n-k+1}$ from the collection $\mathcal{F}_{n-k+1,1}$.
By Lemma 1.4.1 of \cite{davermanvenema}, 
the $(k-1)$-th suspension $\Sigma^{k-1}\alpha$ of $\alpha$ is a wild $k$-cell in $S^n$. 
Hence, for each $n\geq 3$ and $0<k<n$, there is a family $\mathcal F_{n,k}$ of uncountably many wild $k$-cells embedded in $S^n$.

Notice that the $k$-cells in $\mathcal F_{n,k}$ are cell-like non-cellular sets, for $n\geq 3$ and $0<k<n$ (the failure of cellularity follows from Exercise 2.7.4 of
 \cite{davermanvenema}
 and Exercise 2.6.2.(a) of \cite{rushing}). As above, these $k$-cells can be taken to be embedded either in $\mathbb R^n$ or in $S^n$.

\section{Products of Generalized Manifolds}\label{sect3}
A generalized $n$-manifold $X$ is defined as a finite-dimensional Euclidean neighborhood retract (ENR) whose local $\mathbb Z$-homology groups agree with those of the Euclidean $n$-space, i.e. 

 $$H_*(X,X \setminus \{x\};\mathbb Z)\cong H_*(\mathbb R^n;\mathbb R^n \setminus \{0\};\mathbb Z) \ \ 
 \hbox{\rm for all} \ \ x\in X.$$ 

By Theorem 6 of \cite{raymond} and \cite{bredonwildermflds},
 if $X^{k}\times Y^{l}$ is a homology $n$-manifold then $X^{k}$ and $Y^{l}$ are homology $k$- and $l$-manifolds, respectively,
where $k+l=n$ 
(see also Theorem 2 of \cite{brahana}). 

Also, by Problem K.3 on p. 30 of \cite{Hu}, $X\times Y$ is a metrizable ANR if and only if $X$ and $Y$ are metrizable ANR's. Combining both results we get the following:
\begin{proposition}\label{3}
 If $X^{k}\times Y^{l}$ is a generalized $(k+l)$-manifold then $X^k$ and $Y^l$ are generalized $k$- and $l$-manifolds, respectively. 
\end{proposition}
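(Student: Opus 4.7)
The plan is simply to verify separately the two conditions in the definition of a generalized manifold given at the start of this section, namely that each factor is (i) a finite-dimensional ENR, and (ii) has the correct local $\mathbb Z$-homology. Both conditions are essentially handed to us by the two results quoted in the paragraphs immediately preceding the statement; the proof is therefore a bookkeeping argument rather than a new construction.

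For condition (ii), I would invoke the theorem of Raymond, together with the Bredon--Wilder result, cited just above: since $X^k\times Y^l$ is in particular a homology $(k+l)$-manifold, that theorem directly yields that $X^k$ is a homology $k$-manifold and $Y^l$ is a homology $l$-manifold, which is precisely the local homology condition we need.

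For condition (i), note that the ENR hypothesis on $X^k\times Y^l$ provides three pieces of information: the product is finite-dimensional, metrizable, and an ANR. I would first observe that finite-dimensionality passes to the factors (each factor embeds as a slice, hence has dimension no larger than that of the product), and metrizability likewise passes to subspaces. The ANR half is then delivered by Problem K.3 of \cite{Hu}, which gives the equivalence that $X\times Y$ is a metrizable ANR iff each factor is. Finally, a finite-dimensional separable metrizable ANR is an ENR by the standard embedding/retraction theorem, so both $X^k$ and $Y^l$ are ENRs.

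Combining (i) and (ii) gives the proposition. The only point worth a line of caution is the dimension-descent step, which is immediate but should be stated; beyond that there is no real obstacle, since the two quoted results do all the nontrivial work and the proposition is in effect a formal corollary.
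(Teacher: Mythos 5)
Your proposal is correct and follows essentially the same route as the paper, which likewise derives the proposition by combining Raymond's theorem (with Bredon's result) for the local homology condition and Problem K.3 of \cite{Hu} for the ANR condition. The only addition is your explicit remark that finite-dimensionality and metrizability descend to the factors, a small point the paper leaves implicit.
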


\section{Examples}\label{sect4}
Unless otherwise stated, all manifolds in this section will be assumed to be connected.
\begin{theorem}\label{1}
 There exist uncountably many  distinct
 topological
$n$-manifolds $M^n$ such that $M^n=X\times Y$, where $X$ is a
nonhomogeneous nonmanifold factor,
if and only if $n\geq 4$.
\end{theorem}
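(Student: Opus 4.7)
The equivalence splits into necessity and sufficiency.

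For necessity, I would suppose $M^n = X \times Y$ is a connected $n$-manifold with $X$ a nonhomogeneous nonmanifold factor. Since $X$ is not a manifold it is not a point, and connectedness of $M^n$ forces $Y$ to be nontrivial as well (else $M^n \cong X$ would itself be a manifold). Hence $\dim X,\dim Y \ge 1$ with $\dim X+\dim Y=n$, and by Proposition~\ref{3} both $X$ and $Y$ are generalized manifolds. Since every generalized $k$-manifold with $k\le 2$ is a topological $k$-manifold, the nonmanifold hypothesis forces $\dim X\ge 3$, whence $n \ge 4$.

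For sufficiency, my plan is to realize each prospective $M^n$ as $X_\lambda\times\mathbb{R}$, where $X_\lambda=P_\lambda/\alpha_\lambda$ is obtained by collapsing a wild arc in a \emph{varying} connected $(n-1)$-manifold $P_\lambda$. Concretely, I would first fix an uncountable family $\{P_\lambda\}_{\lambda\in\Lambda}$ of pairwise non-homeomorphic connected $(n-1)$-manifolds whose products with $\mathbb{R}$ are still pairwise non-homeomorphic: for $n-1\ge 3$ such families are available by taking noncompact $(n-1)$-manifolds with pairwise non-isomorphic fundamental groups (since $\pi_1$ survives crossing with $\mathbb{R}$), and for the smallest case $n-1=3$ one may take the open $3$-manifolds $P_\lambda:=S^3\setminus\alpha'_\lambda$ arising as complements of uncountably many pairwise inequivalent wild arcs $\alpha'_\lambda\in\mathcal{F}_{3,1}$ from Section~\ref{sect2}, whose fundamental groups are by construction pairwise non-isomorphic. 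In each such $P_\lambda$ I would embed a wild arc $\alpha_\lambda$ inside a coordinate $(n-1)$-ball (available from $\mathcal{F}_{n-1,1}$) and set $X_\lambda:=P_\lambda/\alpha_\lambda$. Since $\alpha_\lambda$ is cell-like, $X_\lambda$ is a generalized $(n-1)$-manifold; the wildness of $\alpha_\lambda$ — i.e.\ the non-simple-connectedness of the ball complement of $\alpha_\lambda$ near the arc — prevents $X_\lambda$ from being a topological manifold at the distinguished point $[\alpha_\lambda]$, and that same distinguished point witnesses nonhomogeneity.

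Finally, the product $X_\lambda\times\mathbb{R}$ is a topological $n$-manifold homeomorphic to $P_\lambda\times\mathbb{R}$: the natural upper semicontinuous decomposition of $P_\lambda\times\mathbb{R}$ whose nondegenerate elements are the cell-like arcs $\alpha_\lambda\times\{t\}$, $t\in\mathbb{R}$, is shrinkable by standard one-parameter product-shrinking results from decomposition theory, with quotient precisely $(P_\lambda/\alpha_\lambda)\times\mathbb{R}$. Setting $M^n_\lambda:=X_\lambda\times\mathbb{R}\cong P_\lambda\times\mathbb{R}$ and $Y:=\mathbb{R}$ then exhibits uncountably many pairwise non-homeomorphic $n$-manifolds admitting the required factorization. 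I expect the main obstacle to be exactly this first step: the wild-arc technology of Section~\ref{sect2} alone produces only uncountably many nonhomogeneous nonmanifold factors $S^{n-1}/\alpha$ whose products with $\mathbb{R}$ all collapse to the single manifold $S^{n-1}\times\mathbb{R}$, so obtaining genuinely uncountably many distinct $M^n$ requires the additional input of an uncountable family of pairwise non-homeomorphic ambient $(n-1)$-manifolds that survives the $\mathbb{R}$-factor.
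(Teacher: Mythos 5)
Your necessity argument is the paper's own: Proposition~\ref{3} together with the fact that generalized manifolds of dimension $\le 2$ are genuine (hence homogeneous) manifolds forces the nonmanifold factor to have dimension $\ge 3$ and the other factor dimension $\ge 1$, so $n\ge 4$. In the sufficiency direction you genuinely diverge, and the divergence comes from how each of you reads ``uncountably many distinct $M^n$''. The paper fixes the ambient space ($\mathbb R^{n-1}$, or $S^{n-1}$ for closed examples) and varies the wild arc $\alpha$ over the family of Section~\ref{sect2}, quoting Theorem~1 of \cite{andrewscurtis} verbatim to get $(\mathbb R^{n-1}/\alpha)\times\mathbb R\cong\mathbb R^n$ (plus a codimension-$k$ variant via \cite{bryanteuclmodcell}); the uncountably many ``distinct examples'' are then the pairwise non-homeomorphic singular factors $X=\mathbb R^{n-1}/\alpha$ (distinguished by $\pi_1$ of the complement of the singular point), while the underlying manifolds are all $\mathbb R^n$. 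You instead make the manifolds $M^n_\lambda=P_\lambda\times\mathbb R$ themselves pairwise non-homeomorphic by varying the ambient $(n-1)$-manifold $P_\lambda$ and collapsing a wild arc in a chart; this actually delivers the literal wording of the theorem, which the paper's construction does not, and your closing remark correctly identifies why the extra input is needed. The price is the one step you assert rather than prove: Theorem~1 of \cite{andrewscurtis} is stated for arcs in Euclidean space, so you must check that the decomposition of $P_\lambda\times\mathbb R$ into the arcs $\alpha_\lambda\times\{t\}$ is still shrinkable when the arc sits in a coordinate ball of a general manifold. This is true and standard (the Andrews--Curtis shrinking homeomorphisms can be taken to be supported in $N\times\mathbb R$ for a prescribed neighborhood $N$ of the arc inside the chart, and then extend by the identity), but it should be cited or proved, since ``locally shrinkable implies shrinkable'' is not automatic for an uncountable family of nondegenerate elements. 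Two smaller items: for $n-1\ge 4$ make the uncountable family explicit (e.g.\ $P_\lambda=(S^3\setminus\alpha'_\lambda)\times\mathbb R^{n-4}$ with $\alpha'_\lambda$ the arcs of \cite{myers}; finitely presented fundamental groups alone would give only countably many manifolds), and in either approach one still owes the observation that the quotient fails to be locally Euclidean at $[\alpha_\lambda]$ because arbitrarily small deleted neighborhoods of the arc are non-simply connected --- the same point the paper itself glosses over.
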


\begin{proof}
\noindent
($\Leftarrow$)
If $n\geq 4$, then by Theorem 1 of \cite{andrewscurtis} it suffices
 to pick any wild arc $\alpha\subset\mathbb R^{n-1}$ such that $\mathbb R^{n-1} \setminus \alpha$ is not simply connected, 
from the family $\mathcal F_{n,1}$ defined in Section \ref{sect2}, and consider the quotient space
 $X^{n-1}=\mathbb R^{n-1}/\alpha$. 
Then $X^{n-1}$ is a generalized $(n-1)$-manifold with one singular point (hence a nonhomogeneous space), since
the space $X^{n-1}$ fails to be locally Euclidean at $\pi(\alpha)\in X$, 
where $\pi:\mathbb R^{n-1}\to X^{n-1}=\mathbb R^{n-1}/\alpha$ is the quotient map. 
On the other hand, $X\times\mathbb R$ is homeomorphic to $\mathbb R^n$, so $X^{n-1}$ is an $n$-manifold factor. 
By letting $\alpha$ range over the uncountable family $\mathcal F_{n,1}$, we 
obtain uncountably many distinct examples. Moreover, if one wants to obtain a closed manifold $M$, one just observes that 
$(S^{n-1}/\alpha)\times \mathbb{R}$ is an $n$-manifold, and so $(S^{n-1}/\alpha)\times S^1$ is a closed $n$-manifold. 

In a similar way, but in a more general setting, one can apply Theorem 1.1 of \cite{bryanteuclmodcell}: Let $D$ be any $k$-cell in the family $\mathcal F_{n,k}$ defined in Section \ref{sect2}, with $n\geq 4$ and $0<k<n$. Then $(\mathbb R^{n-1}/D)\times\mathbb R$ is homeomorphic to $\mathbb R^n$. Again, the space $X=\mathbb R^{n-1}/D$ is not a manifold since it has a unique singular point which makes the space nonhomogeneous. 
By varying $D$ in $\mathcal{F}_{n,k}$ we get uncountably many distinct
examples.
\par
\noindent
($\Rightarrow$)
Let $n\leq 3$ and $M^n=X^k\times Y^l$ so that $k+l=n$. Then by Proposition~\ref{3}, $X^k$ and $Y^l$ are generalized $k$- and $l$-manifolds, respectively.
 Since $M$ is connected, we may assume that $0<k$ and
 $l<3$. Hence these low-dimensional generalized manifolds $X$ and $Y$ are actually genuine manifolds
 (see \cite{repovs92} or Ch. IX of \cite{wilderbook})
 and are therefore also homogeneous.
\end{proof}

\begin{remark}
 For $n\geq 5$, more examples can be constructed. Given a topological $(n-2)$-manifold 
$M^{n-2}$ choose any cell-like usc-decomposition $\mathcal G$ such that 
 $X^{n-2}=M^{n-2}/\mathcal G$ is finite-dimensional. (This dimensionality
 condition  is necessary due to  examples of Dranishnikov \cite{dranishnikov} and Dydak-Walsh \cite{dydakwalsh}). 
By Theorem 26.8 of \cite{davermanbook}, $X^{n-2}\times\mathbb R^2$ has the Disjoint Disks Property and hence, by Edwards' Theorem (see e.g. Theorem 2.2 of \cite{halversonrepovs}):
$$
N^n=X^{n-2}\times \mathbb R^2
$$
\noindent
is a topological $n$-manifold, whereas $X$ is nonhomogeneous if one assumes that the singular set 
is not dense in X, i.e. $\overline{S(X)}\neq X$. Here $S(X)$ denotes
the {\it singular set} of $X$, i.e., the set of all points 
in $X$ having no Euclidean neighborhood.
\end{remark}

\begin{theorem}\label{2}
 There exist uncountably many topological $2n$-dimensional manifolds $M^{2n}$ such that $M^{2n}=X\times X$, where
 $X$ is a nonhomogeneous manifold factor, if and only if $n\geq 3$.
\end{theorem}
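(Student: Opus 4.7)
The plan is to parallel the proof of Theorem~\ref{1}, substituting a second copy of $X$ for the trivial $\mathbb{R}$ factor; this squaring of the manifold's dimension is what forces the threshold $n\geq 3$.

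For the ($\Leftarrow$) direction I would, for each $n\geq 3$, pick a wild arc $\alpha\in\mathcal{F}_{n,1}$ in $S^n$ (see Section~\ref{sect2}) and set $X=S^n/\alpha$. Exactly as in Theorem~\ref{1}, $X$ is a generalized $n$-manifold with the unique singular point $\pi(\alpha)$, hence nonhomogeneous. Since $X$ is a generalized $n$-manifold, its square $X\times X$ is a generalized $2n$-manifold (via the K\"unneth formula for local homology together with the ANR property of products). The crux is to show that $X\times X$ satisfies the Disjoint Disks Property (DDP): once this is established, Edwards' Cell-like Approximation Theorem applies (since $2n\geq 6$) and identifies $X\times X$ with a topological closed $2n$-manifold.

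To verify DDP I would split according to location. The singular set of $X\times X$ is $\Sigma=\{\pi(\alpha)\}\times X \cup X\times\{\pi(\alpha)\}$, which has dimension $n$. At points of $X\times X$ with at least one regular coordinate, a neighborhood has the form $X\times\mathbb{R}^n\cong(X\times\mathbb{R}^2)\times\mathbb{R}^{n-2}$, and Theorem~26.8 of \cite{davermanbook} supplies the DDP via the $\mathbb{R}^2$ factor. At the isolated doubly singular point $(\pi(\alpha),\pi(\alpha))$, a general-position argument applies: any two maps $D^2\to X\times X$ can be approximated by maps whose images miss $\Sigma$, because $2+n<2n$ for $n\geq 3$; once inside the manifold part of $X\times X$, DDP is automatic. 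Letting $\alpha$ range over $\mathcal{F}_{n,1}$, whose members are distinguished by the fundamental groups of their complements, then yields uncountably many pairwise non-homeomorphic examples.

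For the ($\Rightarrow$) direction, suppose $M^{2n}=X\times X$ is a connected topological $2n$-manifold with $X$ nonhomogeneous, and assume $n\leq 2$. By Proposition~\ref{3}, $X$ is a generalized $n$-manifold; since $n\leq 2$, the references \cite{repovs92} and Ch.~IX of \cite{wilderbook} upgrade $X$ to a genuine connected topological manifold of dimension at most $2$. Such manifolds are homogeneous, contradicting the hypothesis, so $n\geq 3$. The main obstacle I anticipate is the DDP verification at the doubly singular point, since Theorem~26.8 of \cite{davermanbook} by itself only covers neighborhoods retaining a Euclidean factor; I expect the codimension/general-position argument sketched above to be sufficient, but this is the step most likely to require care, and an alternative would be to invoke a dedicated product-DDP theorem for cell-like decompositions with low-dimensional non-degeneracy set.
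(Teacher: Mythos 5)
Your overall frame (square a cell-like quotient of $S^n$, get a generalized $2n$-manifold, apply Edwards, plus the same Proposition~\ref{3} argument for the converse) is reasonable, and your ($\Rightarrow$) direction coincides with the paper's. But the step you yourself flag as delicate is a genuine gap, and it is exactly the crux of the theorem. Near the doubly singular point $(\pi(\alpha),\pi(\alpha))$ the space $X\times X$ is not yet known to be a manifold, so the count $2+n<2n$ gives you nothing: general position is a property of manifolds (or of spaces already known to satisfy suitable general position properties), not a consequence of a dimension inequality. Even inside a genuine manifold, a closed set of codimension $\geq 3$ cannot in general be avoided by maps of $2$-cells unless it is $1$-LCC embedded (think of a disk whose boundary links Antoine's necklace in $S^3$); here the relevant set contains the wild point, where the local complement in each factor is precisely \emph{not} simply connected. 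What actually saves the day is a property of the product: small deleted neighborhoods of $(\pi(\alpha),\pi(\alpha))$ have the form $(N_1\times N_2)\setminus\{p_0\}$, which is the union of $(N_1\setminus\{\pi(\alpha)\})\times N_2$ and $N_1\times(N_2\setminus\{\pi(\alpha)\})$ along $(N_1\setminus\{\pi(\alpha)\})\times(N_2\setminus\{\pi(\alpha)\})$, and a van Kampen argument (killing $\pi_1(N_1\setminus\{\pi(\alpha)\})\times\pi_1(N_2\setminus\{\pi(\alpha)\})$ by the two projections) shows this deleted neighborhood is simply connected. That computation, or a citation doing the equivalent work, is what your proof is missing; without it the DDP verification at $p_0$ does not go through.

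The paper sidesteps all of this by quoting a ready-made product theorem: Corollary 3 of \cite{bass} (see also \cite{smith}) gives directly that $(M/\mathcal G)\times(M/\mathcal G)\cong M\times M$ for a cell-like decomposition $\mathcal G$ of a manifold $M$ of dimension at least $3$; one then takes $M=S^n$ and $\mathcal G$ with a single nondegenerate element taken from $\mathcal F_{n,k}$ of Section~\ref{sect2} (the paper allows wild $k$-cells, not only arcs, which gives additional examples). So the intended repair of your argument is precisely your own fallback option: replace the naive general-position step by the simple-connectivity-of-deleted-products argument sketched above, or invoke \cite{bass} or the product-decomposition results in \cite{davermanbook}, after which Edwards' theorem (Theorem 2.2 of \cite{halversonrepovs}) is not even needed since Bass's statement already identifies the square with the manifold $S^n\times S^n$.
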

\begin{proof}
\noindent
($\Leftarrow$)
For $n\geq 3$, we apply Corollary 3 from \cite{bass} (see also \cite{smith}) 
to a cell-like decomposition $\mathcal G$ of a manifold $M$ of dimension $dim\ M\geq 3$ 
in order to obtain that $(M/\mathcal G)\times (M/\mathcal G)$ is homeomorphic to $M\times M$. 
Hence, it is enough to take $M=S^n$ and $\mathcal G$ to be the cell-like decomposition, whose 
only nondegenerate element is one of the $k$-cells from
the family $\mathcal F_{n,k}$ which was
defined in Section \ref{sect2}.
\par
\noindent
($\Rightarrow$)
Let $n\leq 2$. Given $N^{2n}=X\times X$, then (as above) it follows by Proposition~\ref{3} that $X$ is a generalized $n$-manifold. 
Therefore, for $n=2$, $X$ is a generalized $2$-manifold and hence a surface; while for $n=1$, $X$ is a 
generalized $1$-manifold and hence a circle.
Recall that $(n<3)$-dimensional homology manifolds are genuine  manifolds (see Ch. IX of \cite{wilderbook}).
\end{proof}

\begin{remark}
In fact, Bass' result used in the proof of Theorem \ref{2} shows that given two manifolds of dimensions 
$\geq 3$ and cell-like decompositions $\mathcal G$ and $\mathcal G'$ of $M$ and $N$,
 respectively, satisfying certain mild conditions, 
 it follows that  $M\times N \cong (M/\mathcal G)\times (N/\mathcal G')$. Hence, these constructions provide 
affirmative answers in all dimensions $\geq 6$ to the second question from \S 1.7 posed on p. 125 of
 \cite{topproblist}: ``Can the product of two nonhomogeneous spaces be homogeneous?''.
\end{remark}

\section{Epilogue}\label{sect5}
\begin{question}
What can one say about homogeneous continua with nonhomogeneous factors in arbitrary dimensions? More explicitly, we state the following questions:
\begin{enumerate}
\item Can an $(n\leq 3)$-dimensional homogeneous continuum $K$ be written as a product  $K=X\times Y$, 
where at least one of the factors $X$ and $Y$ is not  homogeneous?
\item Can an $(n\leq 5)$-dimensional homogeneous continuum $K$ be written as a product of two nonhomogeneous factors?
\item Can an $(n\leq 5)$-dimensional homogeneous continuum $K$ be written as $K=X\times X$, where $X$
is not homogeneous?
\end{enumerate}
\end{question}

\begin{question}
Does the Logarithmic Law hold for homogeneous compact ANR's, i.e. does the following equality hold:
$$
\rm{dim} (X\times Y)=\rm{dim} X +\rm{dim} Y?
$$
According to the proof sketched in \cite{fedorchuk}, the so-called {\it Pontryagin surfaces} $T_p$ are homogeneous. 
Recall that these celebrated
compacta, which have the property that
$\dim T_p$= 2 for all prime $p$, but 
$\dim(T_p \times T_q) = 3$, whenever $p\neq q$, show that the Logarithmic Law fails if $X$ and $Y$ are not ANR's. 
Recent work \cite{Bryant1} was believed to lead to a positive answer to Question~5.2 (see \cite{fedorchuk}). 
However, last year Bryant discovered a serious gap in the proof of Theorem 2 from \cite{Bryant1}.
\end{question}

\begin{remark}
The most famous problem
still open in decomposition theory is the classical 
R. L. Moore Problem
from
the 1930's, concerning the characterization of topological $n$-manifolds. It
asks if
every (finite-dimensional) cell-like decomposition
$\mathbb{R}^n/\mathcal G$ of $\mathbb{R}^n$ is a topological factor of $\mathbb{R}^{n+1}$, i.e. 
$$(\mathbb{R}^n/\mathcal{G})\times\mathbb{R}\cong \mathbb{R}^{n+1}$$
(see \cite{halverson-repovs} for a recent survey on this difficult
problem).

 In connection with the Moore Problem
 we mention Problem 9.5 of \cite{quinnproblemlist}, which asks
 if the product of a homology manifold and $\mathbb R$ is always homogeneous? 
Many examples (in particular, those in  Theorems~\ref{1} and \ref{2}) give partial affirmative answers to both
of these
 questions, but there are still far more examples to be considered.
\end{remark}

\section*{Acknowledgments}
The first three authors were supported by the  MTM project 2010-20445.
The fourth author was supported by the SRA grants P1-0292-0101 and J1-2057-0101.
We gratefully acknowledge remarks
and suggestions from John L. Bryant, Janusz Prajs and both referees.


\newcommand{\noopsort}[1]{} \def\cprime{$'$} \newcommand{\printfirst}[2]{#1}
  \newcommand{\singleletter}[1]{#1} \newcommand{\switchargs}[2]{#2#1}
\providecommand{\bysame}{\leavevmode\hbox to3em{\hrulefill}\thinspace}
\providecommand{\MR}{\relax\ifhmode\unskip\space\fi MR }
\providecommand{\MRhref}[2]{%
  \href{http://www.ams.org/mathscinet-getitem?mr=#1}{#2}
}
\providecommand{\href}[2]{#2}

\end{document}